\newcommand\mycom[2]{\genfrac{}{}{0pt}{}{#1}{#2}}
\numberwithin{equation}{section}
\newtheorem{theorem}{Theorem}
\newtheorem{remark}{Remark}
\begin{document}

\allowdisplaybreaks

\thispagestyle{plain}

\vspace{4cc}
\begin{center}

{\LARGE \bf $p$--extended Mathieu series from the Schl\"omilch series point of view}
\rule{0mm}{6mm}\renewcommand{\thefootnote}{} \vspace{1cc}

{\large \bf Dragana Jankov Ma\v sirevi\' c$^\dag$ and Tibor K. Pog\'any$^\ddag$}
\bigskip

{\it $^\dag$ Department of Mathematics, University of Osijek, Trg Lj. Gaja 6, 31000 Osijek, Croatia\\

$^\ddag$ Faculty od Maritime Studies, University of Rijeka,
Studentska 2, 51000 Rijeka, Croatia \and Institute of Applied
Mathematics, \'Obuda University, 1034 Budapest, Hungary}

\footnotetext{E-mail: \texttt{djankov@mathos.hr} (D. Jankov Ma\v
sirevi\' c), \texttt{poganj@pfri.hr} (T. K. Pog\'any)}

\vspace{2cc}

\parbox{24cc}
{\small {\bf Abstract}. Motivated by certain current results by Parmar and Pog\'any \cite{PP} in which the authors introduced the
so--called $p$--extended Mathieu series the main aim of this paper is to present a connection between such series and a various types
of Schl\"omilch series.

\bigskip

{\bf 2010 Mathematics Subject Classification.} Primary: 33E20, 40H05; Secondary: 11M41, 26A33, 33C10.
\bigskip

{\bf Keywords and Phrases.} $p$--extended Mathieu series, Gr\"unwald--Letnikov derivative, Schl\"omilch series}
\end{center}

\section{Introduction and motivation}

The series of the form
   \begin{equation*}
      S(r) = \sum_{n \ge 1}\dfrac{2n}{(n^2+r^2)^2},\qquad r>0
   \end{equation*}
is known in literature as Mathieu series. \'Emile Leonard Mathieu was the first who investigated such series in 1890 in his book
\cite{Mathieu}. There is a wide range of various generalizations of the Mathieu series, and one of them is the so--called
generalized Mathieu series with a fractional power reads \cite[p. 2, Eq. (1.6)]{CL} (and also consult \cite[p. 181]{GP})
   \begin{equation*}
      S_\mu(r) = \sum_{n\ge1}\dfrac{2n}{(n^2+r^2)^{\mu+1}}, \qquad r>0,\,\,\mu>0;
   \end{equation*}
which can also be presented in terms of the Riemann Zeta function \cite[p. 3, Eq. (2.1)]{CL}
   \begin{equation}\label{A21}
      S_{\mu}(r) = 2\sum_{n\ge0} r^{2n}(-1)^n\left({\mu+n} \atop n\right)\zeta(2\mu+2n+1), \qquad |r|<1.
	 \end{equation}
Having in mind \eqref{A21} Parmar and Pog\'any \cite{PP} recently introduced the {\it $p$--extended Mathieu series}
	 \begin{equation}\label{A3}
	    S_{\mu,p}(r) = 2\sum_{n \ge 0} r^{2n} (-1)^{n}\binom{\mu+n}{n} \zeta_{p}(2\mu+2n+1),
   \end{equation}
where $\Re(p)>0$ or $p=0$, $\mu>0$. Here and in what follows $\zeta_{p}$ stands for the $p$--extension of the Riemann $\zeta$ function
\cite{Ch-Qa-Bo-Ra-Zu}:
   \begin{equation*} 
      \zeta_p(\alpha) = \frac1{\Gamma(\alpha)}\int_0^\infty \frac{t^{\alpha-1}{\rm e}^{-\frac{p}{t}}}{{\rm e}^t-1} \,{\rm d}t
   \end{equation*}
defined for $\Re(p)>0$ or $p=0$ and $\Re(\alpha)>0$ and it reduces to the Riemann zeta function when $p=0$. Also, \eqref{A3} one
reduces to \eqref{A21} when $p=0$.

Parmar and Pog\'any \cite{PP} obtained an integral form of such series, which reads
 \begin{equation}\label{A4}
      S_{\mu,p}(r) = \frac{\sqrt{\pi}}{(2r)^{\mu-\frac12}\Gamma(\mu+1)}\int_0^\infty \frac{t^{\mu+\frac12}
			               {\rm e}^{-\frac{p}{t}}}{{\rm e}^t-1} J_{\mu-\frac{1}{2}}(rt)\,{\rm d}t;
   \end{equation}
here $\Re(p)>0$ or $p=0$, $\mu>0$.

On the other hand, the series of the form
   \[ \sum_{n \geq 0} a_n \mathscr B_\nu(n z)\,, \qquad z \in \mathbb C, \]
which building blocks $\mathscr B_\nu(\cdot)$ consist from either Bessel, modified Bessel of the first and second kind
or alike functions such as Hankel, Lommel, Struve, modified Struve and associated ones of a generic order $\nu$, we usually consider under
the common name {\it Schl\"omilch series} \cite{JP, Wat}.

Motivated by that newly introduced Mathieu series which members contain the extension of the Riemann zeta function $\zeta_{p}$ and
also the fact that $\zeta_{p}$ can be presented as Schl\"omilch series of modified Bessel functions of the second kind i.e. as
\cite[p. 1240]{Ch-Qa-Bo-Ra-Zu}
   \[\zeta_p(\alpha) = \dfrac{2\,p^{\frac\alpha2}}{\Gamma(\alpha)} \sum_{n \geq 1} \dfrac{K_\alpha(2\sqrt{np})}
	                     {n^{\frac\alpha2}},\qquad \alpha, p>0\]
our main aim in this paper is to derive new representations of our series in terms of the  various Schl\"omilch series. In the
next section we would derive new representations  of \eqref{A3} in terms of Schl\"omilch series which members contain derivation
(ordinary or fractional) of a combination of Bessel function of the first kind $J_\nu$ and modified Bessel function of the second kind
$K_\nu$. In the last section we would also derive some connection formulas between our Mathieu series and Schl\"omilch series but
this time with members containing only modified Bessel functions of the second kind.

\section{Connection between $S_{\mu, p}(r)$ and Schl\"omilch series of $J_\nu \cdot K_\mu$}

In this section, our main aim is to derive connection formulas between $p$--extended Mathieu series $S_{\mu,p}(r)$ and Schl\"omilch
series which members contain combination of Bessel functions of the first kind $J_\nu$ and modified Bessel functions of the second
kind $K_\nu$ of the order $\nu$.

Our derivation procedure requires the Gr\"unwald--Letnikov fractional derivative of order $-\alpha$, $\alpha > 0$
with respect to an argument $x$ of a suitable function $f$ defined by \cite{Samko}
   \begin{equation} \label{GL}
      \mathbb D_x^{-\alpha} [f] = \lim_{n \to \infty} \left( \frac {\displaystyle n}{\displaystyle x - a} \right)^\alpha
                                  \sum_{m = 0}^n \frac{\Gamma (\alpha + m)}{m!\, \Gamma (\alpha)}
                                  f\left(x - m \frac{x - a}h\right), \qquad a <x\,.
   \end{equation}
Several numerical algorithms are available for the direct computation of \eqref{GL}; see e.g. \cite{Diet, Murio, Sousa}.

\begin{theorem}
For all $\min\{\Re(p),\Re(q),\gamma\} >0$ and $\alpha>\tfrac12$  there holds
   \begin{align}\label{Al1}
      S_{\alpha-\frac32, p}(\gamma) &= \dfrac{2(-1)^\alpha\sqrt{\pi}}{(2\gamma)^{\alpha-2}\Gamma(\alpha-\frac12)}
                        \sum_{k \geq 1} \mathbb{D}_q^\alpha \Big( J_{\alpha-2}
												\left(\sqrt{2p}\, [\sqrt{q^2+\gamma^2}-q]^{\frac12}\right) \\ \nonumber
                &\qquad \times K_{\alpha-2}\left(\sqrt{2p}\, [\sqrt{q^2+\gamma^2}+q]^{\frac12}\right) \Big) \Big|_{q=k} \, .
   \end{align}
Further, for $\alpha=n\in\mathbb{N}$ we have
   \begin{align*}
      S_{n-\frac32, p}(\gamma) &= \dfrac{2(-1)^n\sqrt{\pi}}{(2\gamma)^{n-2}\Gamma(n-\frac12)}
                   \sum_{k \geq 1} \frac{\partial^n}{\partial q^n} \Big[J_{n-2}
									 \left(\sqrt{2p}\,[\sqrt{q^2+\gamma^2}-q]^{\frac12}\right)\nonumber \\
           &\qquad \times K_{n-2}\left(\sqrt{2p}\,[\sqrt{q^2+\gamma^2}+q]^{\frac12}\right)\Big]\Big|_{q=k}\, \, .
   \end{align*}
\end{theorem}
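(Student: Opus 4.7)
The plan is to begin from the integral representation \eqref{A4}, expand $(e^{t}-1)^{-1}$ as a geometric series in $e^{-kt}$, and then realise each resulting moment integral as a fractional (or, for integer $\alpha=n$, ordinary) $q$-derivative of a closed-form Bessel integral evaluated at $q=k$. Summation in $k\ge 1$ will then deliver exactly the Schl\"omilch series on the right of \eqref{Al1}.

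Concretely, setting $\mu=\alpha-\tfrac{3}{2}$ in \eqref{A4} turns the prefactor into $\sqrt{\pi}/[(2\gamma)^{\alpha-2}\Gamma(\alpha-\tfrac12)]$, the weight into $t^{\alpha-1}e^{-p/t}/(e^{t}-1)$, and the Bessel order into $\alpha-2$. Writing $1/(e^{t}-1)=\sum_{k\ge 1}e^{-kt}$ and interchanging summation with integration --- legitimate since $J_{\alpha-2}(\gamma t)$ is bounded on $(0,\infty)$, $e^{-p/t}$ suppresses the power singularity at the origin, and $e^{-kt}$ furnishes exponential decay at infinity --- reduces the problem to evaluating
\[
  \mathcal I_{k}(\alpha):=\int_{0}^{\infty}t^{\alpha-1}\,e^{-p/t-kt}\,J_{\alpha-2}(\gamma t)\,\mathrm{d}t .
\]
Factoring $t^{\alpha-1}=t^{-1}\cdot t^{\alpha}$ and invoking the Gr\"unwald--Letnikov eigenfunction identity $\mathbb D_{q}^{\alpha}[e^{-qt}]=(-t)^{\alpha}e^{-qt}$ --- which for integer $\alpha=n$ is elementary, and for general $\alpha$ follows from applying \eqref{GL} to the exponential --- rewrites
\[
  \mathcal I_{k}(\alpha)=(-1)^{\alpha}\,\mathbb D_{q}^{\alpha}\bigl|_{q=k}F(q),\qquad F(q):=\int_{0}^{\infty}\frac{e^{-p/t-qt}}{t}\,J_{\alpha-2}(\gamma t)\,\mathrm{d}t.
\]

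The crucial analytical ingredient is then the closed form
\[
  F(q)=2\,J_{\alpha-2}\!\Bigl(\sqrt{2p}\,[\sqrt{q^{2}+\gamma^{2}}-q]^{1/2}\Bigr)\,K_{\alpha-2}\!\Bigl(\sqrt{2p}\,[\sqrt{q^{2}+\gamma^{2}}+q]^{1/2}\Bigr),
\]
a classical Prudnikov--Brychkov--Marichev-type entry; should an independent derivation be wanted, the substitution $t=\sqrt{p/q}\,u$ recasts the exponent into the symmetric form $e^{-\sqrt{pq}(u+1/u)}$, after which a Mellin--Parseval computation using $\mathcal M\{e^{-p/t-qt}\}(s)=2(p/q)^{s/2}K_{s}(2\sqrt{pq})$ together with the standard Mellin transform of $t^{-1}J_{\alpha-2}(\gamma t)$ produces the product $J_{\alpha-2}\cdot K_{\alpha-2}$ after the residue sum is identified. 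Assembling the prefactor, the factor $2$ from $F$, the sign $(-1)^{\alpha}$, and the sum over $k\ge 1$ reproduces precisely \eqref{Al1}; the integer version is the immediate specialisation $\mathbb D_{q}^{n}=\partial^{n}/\partial q^{n}$.

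The main obstacle I anticipate is justifying the commutation of the fractional operator $\mathbb D_{q}^{\alpha}$ with the improper integral defining $F(q)$, and thereafter with the summation over $k$. For non-integer $\alpha$ one must control the discrete difference scheme of \eqref{GL} uniformly in the limit parameter $n$, which amounts to a dominated-convergence estimate on the summands $f(q-m(q-a)/n)$ uniform for $q$ in a small neighbourhood of each positive integer $k$; the required exponential decay in the Bessel integrand makes this plausible but tedious to verify. A secondary, essentially notational, concern is a consistent branch choice for $(-1)^{\alpha}$, made so as to match the convention underlying \eqref{GL}; this subtlety evaporates in the integer-order statement, whose second display in the theorem is then immediate.
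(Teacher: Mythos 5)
Your proposal is correct and follows essentially the same route as the paper: the same closed-form Laplace-type integral $\int_0^\infty x^{-1}{\rm e}^{-qx-p/x}J_\nu(\gamma x)\,{\rm d}x = 2J_\nu(z_-)K_\nu(z_+)$, the same Gr\"unwald--Letnikov eigenfunction identity $\mathbb D_q^\alpha\,{\rm e}^{-qx}=(-x)^\alpha{\rm e}^{-qx}$, and the same geometric-series expansion of $({\rm e}^x-1)^{-1}$, merely assembled in the reverse order (you expand the Bose factor first and then recognise each moment integral as a fractional derivative, whereas the paper differentiates the closed-form integral first and then sums over $q=k+1$).
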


\begin{proof}
In order to prove the desired results, let us first consider the integral \cite[p. 708, Eq. 6.635.3]{GR}
   \begin{align} \label{Int}
        A_{p,q}(\gamma) &= \int_0^\infty x^{-1} {\rm e}^{-qx-p/x} J_\nu(\gamma x) {\rm d}x\\ \nonumber
                            &= 2\, J_\nu\left(\sqrt{2p}\,[\sqrt{q^2+\gamma^2}-q]^{\frac12}\right)\,
                               K_\nu\left(\sqrt{2p}\,[\sqrt{q^2+\gamma^2}+q]^{\frac12}\right)\,,
   \end{align}
where $\min\{\Re(p),\Re(q),\gamma\} >0$.\footnote{Actually, $A_{p,q}(\gamma)$ is the Laplace transform of $x \mapsto x^{-1}
{\rm e}^{-p/x} J_\nu(\gamma x)$ at the argument $q$.}

Now, using the Gr\"unwald-Letnikov fractional derivative
   \begin{equation*}
      \mathbb D_q^\alpha {\rm{e}}^{-q x} = (-x)^\alpha\,{\rm{e}}^{-qx}
   \end{equation*}
valid for every real $\alpha> -\nu $ we get
   \[ \mathbb{D}_q^\alpha  A_{p,q}(\gamma)  = (-1)^\alpha\int_0^\infty x^{\alpha-1} {\rm
e}^{-qx-p/x} J_\nu(\gamma x) {\rm d}x. \] Further, specifying $q
=k+1$ and summing up the previous equality for $k \in \mathbb N_0$
we have
   \[ \sum_{k \geq 0} \mathbb{D}_q^\alpha A_{p,q}(\gamma)\big|_{q=k+1}
           = (-1)^\alpha \int_0^\infty \frac{x^{\alpha-1} {\rm e}^{-p/x}}{{\rm e}^x-1} J_\nu(\gamma x) {\rm d}x\, .\]
Setting $\nu=\alpha-2$ with the help of
the integral representation \eqref{A4}  we get
   \begin{align*}
       \int_0^\infty \frac{x^{\alpha-1} {\rm e}^{-p/x}}{{\rm e}^x-1} J_{\alpha-2}(\gamma x) {\rm d}x
                 &= \dfrac{(2\gamma)^{\alpha-2}\Gamma(\alpha-\frac12)}{\sqrt{\pi}}S_{\alpha-\frac32, p}(\gamma)
   \end{align*}
Now, from the previous calculations, using also  \eqref{Int}, we have
   \begin{align*}
      S_{\alpha-\frac32, p}(\gamma) &= \dfrac{(-1)^\alpha\sqrt{\pi}}{(2\gamma)^{\alpha-2}\Gamma(\alpha-\frac12)}
                                   \sum_{k \geq 1} \mathbb{D}_q^\alpha A_{p,q}(\gamma) \Big|_{q=k} \\
                    &= \dfrac{2(-1)^\alpha\sqrt{\pi}}{(2\gamma)^{\alpha-2}\Gamma(\alpha-\frac12)}
                             \sum_{k\geq 1} \mathbb{D}_q^\alpha \Big(J_{\alpha-2}\left(\sqrt{2p} [\sqrt{q^2+\gamma^2}-q]^{\frac12}\right) \\
                    &\qquad\times K_{\alpha-2}\left(\sqrt{2p} [\sqrt{q^2+\gamma^2}+q]^{\frac12}\right) \Big) \Big|_{q=k} \,  \bigskip
   \end{align*}
which is equal to \eqref{Al1}.

Next, for a positive integer $\alpha=n$ (in fact $A_{p,q}(\gamma)$
converges for all $n+\nu>0$) consider
   \[ \frac{\partial^n}{\partial q^n}\, A_{p,q}(\gamma) = (-1)^n\int_0^\infty x^{n-1} {\rm e}^{-qx-p/x} J_\nu(\gamma x) {\rm d}x. \]
The same procedure as above yields
   \[ \int_0^\infty \frac{x^{n-1} {\rm e}^{-p/x}}{{\rm e}^x-1} J_\nu(\gamma x) {\rm d}x
            = (-1)^n\, \sum_{k \geq 0} \frac{\partial^n}{\partial q^n}A_{p,q}(\gamma)\Big|_{q=k+1} \,.\]
Again with the help of \eqref{A4} and \eqref{Int} and substituting $\nu=n-2$ we have
   \begin{align*}
      S_{n-\frac32, p}&(\gamma) = \dfrac{(-1)^n\sqrt{\pi}}{(2\gamma)^{n-2}\Gamma(n-\frac12)}\,
                                  \sum_{k \geq 1} \frac{\partial^n}{\partial q^n} A_{p,q}(\gamma)\Big|_{q=k}
                                = \dfrac{2(-1)^n\sqrt{\pi}}{(2\gamma)^{n-2}\Gamma(n-\frac12)}\\
                   &\qquad \times \sum_{k \geq 1} \frac{\partial^n}{\partial q^n}
                                  \left[J_{n-2}\left(\sqrt{2p}[\sqrt{q^2+\gamma^2}-q]^{\frac12}\right)\,
                                  K_{n-2}\left(\sqrt{2p}[\sqrt{q^2+\gamma^2}+q]^{\frac12}\right)\right]\Big|_{q=k}\, ,
   \end{align*}
which completes the proof.
\end{proof}

\begin{theorem} For all $\Re(p)>0$ we have
   \begin{align}\label{B1}
      S_{\frac12,p}(\gamma)& = -4\gamma \sum_{k\ge1} \frac{\partial^3}{\partial q^3}\,
			                     \left(\dfrac{J_1(\sqrt{2p}[\sqrt{q^2+\gamma^2} - q]^{\frac12})K_0(\sqrt{2p}[\sqrt{q^2+\gamma^2}
												   + q]^{\frac12})}{\sqrt{2p}[\sqrt{q^2+\gamma^2} + q]^{\frac12}}\right.\\ \nonumber
											 &\qquad+\left. \dfrac{J_0(\sqrt{2p}[\sqrt{q^2+\gamma^2} - q]^{\frac12})
											     K_1(\sqrt{2p}[\sqrt{q^2+\gamma^2} + q]^{\frac12})}{\sqrt{2p}[\sqrt{q^2+\gamma^2}
													 - q]^{\frac12}}\right)\Big|_{q=k}.
	 \end{align}
Moreover, it is
   \begin{equation}\label{B2}
      S_{-\frac12,p}(\gamma) = 4\gamma \sum_{k\ge1} \frac{\partial}{\partial q}\,
			                     \left(J_1(\sqrt{2p}[\sqrt{q^2+\gamma^2} - q]^{\frac12}) K_1(\sqrt{2p}[\sqrt{q^2+\gamma^2}
													 + q]^{\frac12})\right)\Big|_{q=k}.
	 \end{equation}
\end{theorem}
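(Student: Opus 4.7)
The plan is to specialize the integral representation \eqref{A4} at $\mu = \pm \tfrac12$, to expand the Bose--Einstein kernel via the geometric series $(e^t-1)^{-1} = \sum_{k \ge 1} e^{-kt}$, and then to produce the correct power of $t$ in each summand by differentiating the master integral $A_{p,q}(\gamma)$ from \eqref{Int} under the integral sign in $q$ and in $p$. Throughout I abbreviate $a := \sqrt{2p}[\sqrt{q^2+\gamma^2} - q]^{1/2}$ and $b := \sqrt{2p}[\sqrt{q^2+\gamma^2} + q]^{1/2}$, and I will repeatedly invoke the identities $ab = 2p\gamma$, $\partial a/\partial p = a/(2p)$, and $\partial b/\partial p = b/(2p)$, all obtained by a direct computation.

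Formula \eqref{B2} is the direct case. Setting $\mu = -\tfrac12$ in \eqref{A4}---which is legitimate since $J_{-1}(\gamma t) = O(t)$ near zero absorbs the singularity of $(e^t-1)^{-1}$---and using $J_{-1} = -J_1$, I obtain
\[ S_{-1/2,p}(\gamma) = -2\gamma \int_0^\infty \frac{e^{-p/t}}{e^t - 1}\,J_1(\gamma t)\,dt. \]
After the geometric expansion, each summand is $\int_0^\infty e^{-kt-p/t} J_1(\gamma t)\,dt = -\frac{\partial}{\partial q}A_{p,q}(\gamma)\big|_{q=k}$ (with $\nu = 1$), and substituting the closed form $A_{p,q}(\gamma) = 2 J_1(a) K_1(b)$ from \eqref{Int} delivers \eqref{B2} immediately.

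Formula \eqref{B1} arises from $\mu = \tfrac12$, which yields $S_{1/2,p}(\gamma) = 2\int_0^\infty t (e^t-1)^{-1} e^{-p/t} J_0(\gamma t)\,dt$. The extra factor of $t$ (absent from \eqref{Int}) is manufactured by applying $\partial^4/(\partial q^3\,\partial p)$ to $A_{p,q}(\gamma)$: each $\partial/\partial q$ brings down a factor $-t$ and $\partial/\partial p$ brings down $-1/t$, so the combined operator pulls out $(-t)^3(-1/t) = t^2$, which against the intrinsic $t^{-1}$ inside $A_{p,q}$ produces the desired $t$. Geometric expansion then gives
\[ S_{1/2,p}(\gamma) = 4 \sum_{k \ge 1} \frac{\partial^4}{\partial q^3\,\partial p}\bigl[J_0(a) K_0(b)\bigr]\Big|_{q=k} \]
with $\nu = 0$ in \eqref{Int}.

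The main step---and the one that forces the precise shape of \eqref{B1}---is to evaluate the innermost $\partial/\partial p$ first. From $J_0' = -J_1$, $K_0' = -K_1$ together with the identities above, the product rule gives
\[ \frac{\partial}{\partial p}\bigl[J_0(a) K_0(b)\bigr] = -\frac{1}{2p}\bigl[a J_1(a) K_0(b) + b J_0(a) K_1(b)\bigr], \]
and $ab = 2p\gamma$---equivalently $a/(2p) = \gamma/b$ and $b/(2p) = \gamma/a$---then recasts this as $-\gamma\bigl(J_1(a) K_0(b)/b + J_0(a) K_1(b)/a\bigr)$. Pulling the constant $-\gamma$ past the outer $\partial^3/\partial q^3$ reproduces exactly the bracket in \eqref{B1}. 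I anticipate no further serious obstacle, only the routine justification that summation and differentiation may be interchanged, which is guaranteed by the absolute convergence of the underlying integrals on $\Re(p) > 0$.
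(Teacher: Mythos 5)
Your argument is correct, and its skeleton is the same as the paper's: specialize the integral representation \eqref{A4} at $\mu=\pm\tfrac12$, expand $(e^{t}-1)^{-1}=\sum_{k\ge1}e^{-kt}$, and realize each summand as a $q$-derivative of a closed-form Laplace transform. For \eqref{B2} your computation coincides with the paper's almost verbatim (the paper reaches $J_1K_1$ via $\nu=-1$ and the parities $J_{-1}=-J_1$, $K_{-1}=K_1$, you via $\nu=1$ after first flipping the sign of $J_{-1}$; same thing). The one genuine difference is in \eqref{B1}: the paper quotes a second tabulated transform, $\int_0^\infty x^{-2}e^{-qx-p/x}J_0(\gamma x)\,dx=2\gamma\left(z_+^{-1}J_1(z_-)K_0(z_+)+z_-^{-1}J_0(z_-)K_1(z_+)\right)$ (Prudnikov, Eq.\ \textbf{2.12.10.2}), and applies $\partial_q^3$ to it, whereas you manufacture the same $x^{-2}$ weight by applying $\partial_p$ to \eqref{Int} and then rederive that closed form yourself from $J_0'=-J_1$, $K_0'=-K_1$, $\partial a/\partial p=a/(2p)$, $\partial b/\partial p=b/(2p)$ and $ab=2p\gamma$. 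Your route is self-contained (only one table entry needed instead of two) and it explains where the otherwise unmotivated combination $b^{-1}J_1(a)K_0(b)+a^{-1}J_0(a)K_1(b)$ in \eqref{B1} comes from; the paper's route is shorter on the page. All your auxiliary identities check out, the prefactors match ($2\cdot 2=4$ and the extracted $-\gamma$ give exactly $-4\gamma$), and the interchanges of sum, integral and derivative are unproblematic for $\Re(p)>0$, $\Re(q)>0$. The only caveat, which you share with the paper, is that \eqref{A4} is stated for $\mu>0$ and its use at $\mu=-\tfrac12$ rests on the (routine) analytic continuation you gesture at.
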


\begin{proof}
With the help of the  integral \cite[p. 188, Eq. {\bf{2.12.10.2}}]{Prudnikov2}
   \[ B_{p,q}(\gamma)=\int_0^\infty x^{-2}{\rm e}^{-qx-p/x}J_0(\gamma x)\,{\rm d}x
           = 2\gamma \left(z_+^{-1}J_1(z_-)K_0(z_+)+z_-^{-1}J_0(z_-)K_1(z_+)\right),\]
where $z_{\pm}=\sqrt{2p}[\sqrt{q^2+\gamma^2}\pm q]^{1/2}$, $\min\{\Re(q),\Re(p)\}>0$, we conclude
   \[ \frac{\partial^3}{\partial q^3}\, B_{p,q}(\gamma) = -\int_0^\infty x {\rm e}^{-qx-p/x} J_0(\gamma x) {\rm d}x \]
which, with the help of \eqref{A4}, gives us
   \[ \sum_{k \geq 0}\frac{\partial^3}{\partial q^3}\, B_{p,q}(\gamma)\Big|_{q=k+1}
	          = -\int_0^\infty \frac{x {\rm e}^{-p/x}}{{\rm e}^x-1} J_0(\gamma x){\rm d}x = -\dfrac12\,S_{1/2,p}(\gamma) \]
which coincides with \eqref{B1}.

In the same way, but this time using \cite[p. 188, Eq. {\bf{2.12.10.1}}]{Prudnikov2}
   \[ C_{p,q}(\gamma)=\int_0^\infty x^{-1}{\rm e}^{-qx-p/x}J_\nu(\gamma x)\,{\rm d}x
           = 2J_\nu(z_-)K_\nu(z_+),\]
where $\min\{\Re(p),\Re(q)\}>0$, and $z_{\pm}$ has the same meaning as above, with the aid of parity of Bessel and modified
Bessel function $J_{-1}(x)=-J_1(x); K_{-1}(x)=K_1(x)$ we deduce \eqref{B2}.
\end{proof}

\begin{remark}
From \eqref{B2}, bearing in mind \cite{Wolf1, Wolf2}:
\[2\left(J_1(x)\,K_1(x)\right)'=\left(J_0(x)-J_2(x)\right)K_1(x)-J_1(x)\left(K_0(x)+K_2(x)\right),\]
we can infer a new representation for $S_{-\frac12,p}(\gamma)$.
\end{remark}

\section{$S_{\mu, p}(r)$ and the Schl\"omilch series of $K_\nu$ terms}

Considering now specialized $p$--extended Mathieu series, that is in which $\mu = 0, 1, 2$, we report on their Schl\"omilch--series 
expansion {\em via} modified Bessel functions of the second kind $K_{\mu+1}$. 

\begin{theorem}\label{tm3}
For all $\Re(p)>0$, $\gamma>0$ there hold
   \begin{align} \label{B3}
      S_{0,p}(\gamma) & = 2\sqrt{p}\,\sum_{k\ge1} \left(\dfrac{K_1\left(2\sqrt{p(k+{\rm i}\,\gamma)}\right)}{\sqrt{k+{\rm i}\,\gamma}}
			                  + \dfrac{K_1\left(2\sqrt{p(k-{\rm i}\,\gamma)}\right)}{\sqrt{k-{\rm i}\,\gamma}}\right), \\ \label{B4}
      S_{1,p}(\gamma) & = \dfrac{p\,{\rm{i}}}{\gamma}\sum_{k\ge1}\left(\dfrac{K_2\left(2\sqrt{p(k+{\rm i}\,\gamma)}\right)}
			                    {\sqrt{k+{\rm i}\,\gamma}}-\dfrac{K_2\left(2\sqrt{p(k-{\rm i}\,\gamma)}\right)}
													{\sqrt{k-{\rm i}\,\gamma}}\right).
   \end{align}
\end{theorem}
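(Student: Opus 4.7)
The plan is to exploit the integral representation \eqref{A4} in the very special cases $\mu\in\{0,1\}$, where the Bessel function $J_{\mu-\frac12}$ reduces to an elementary trigonometric function via the classical half--integer formulas
\[ J_{-\frac12}(x)=\sqrt{\tfrac{2}{\pi x}}\cos x,\qquad J_{\frac12}(x)=\sqrt{\tfrac{2}{\pi x}}\sin x. \]
Inserting these into \eqref{A4} and cancelling the $t^{-\frac12}$ arising from $J_{\mu-\frac12}(\gamma t)$ against the $t^{\mu+\frac12}$ prefactor, together with cancelling the $\sqrt{\pi}$--and--$\gamma$ constants, reduces the task to evaluating
\[ S_{0,p}(\gamma)=2\int_0^\infty \frac{\cos(\gamma t)\,{\rm e}^{-p/t}}{{\rm e}^t-1}\,{\rm d}t,\qquad S_{1,p}(\gamma)=\frac{1}{\gamma}\int_0^\infty \frac{t\sin(\gamma t)\,{\rm e}^{-p/t}}{{\rm e}^t-1}\,{\rm d}t. \]

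Next, I would expand the Bose--Einstein kernel as a geometric series $({\rm e}^t-1)^{-1}=\sum_{k\ge1}{\rm e}^{-kt}$ and rewrite the trigonometric factors in exponential form, $2\cos(\gamma t)={\rm e}^{{\rm i}\gamma t}+{\rm e}^{-{\rm i}\gamma t}$ and $2{\rm i}\sin(\gamma t)={\rm e}^{{\rm i}\gamma t}-{\rm e}^{-{\rm i}\gamma t}$. This breaks each integral into two pieces of the universal shape $\int_0^\infty t^{\nu-1}{\rm e}^{-(k\pm{\rm i}\gamma)t-p/t}\,{\rm d}t$, for which the standard Macdonald--type evaluation
\[ \int_0^\infty t^{\nu-1}{\rm e}^{-at-b/t}\,{\rm d}t=2\Bigl(\tfrac{b}{a}\Bigr)^{\nu/2}K_\nu\bigl(2\sqrt{ab}\bigr), \qquad \Re(a),\Re(b)>0 \]
(which remains valid when $a$ is complex with positive real part) applies with parameters $a=k\pm{\rm i}\gamma$, $b=p$. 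Taking $\nu=1$ in the $\mu=0$ case yields the $K_1$ terms with the symmetric sum giving \eqref{B3}; taking $\nu=2$ in the $\mu=1$ case yields the $K_2$ terms, and the antisymmetric $\sin$--combination produces the difference in \eqref{B4}, with the overall factor $p\,{\rm i}/\gamma$ collected from $1/(2{\rm i}\gamma)$ and the $(b/a)^{\nu/2}$ prefactors.

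The main technical point to settle is the legitimacy of the term--by--term integration, which amounts to an application of Fubini--Tonelli. Absolute convergence is straightforward: for each $k\ge1$ the integrand decays super--exponentially as $t\to 0^{+}$ thanks to ${\rm e}^{-p/t}$, and exponentially as $t\to\infty$ thanks to ${\rm e}^{-kt}$; summing the absolute bounds recovers the integrable kernel ${\rm e}^{-p/t}/({\rm e}^t-1)$ multiplied by a power of $t$, which is finite on $(0,\infty)$ under $\Re(p)>0$, $\gamma>0$. Once this is in place, the calculation is linear algebra and the two claimed Schl\"omilch--type representations follow directly.
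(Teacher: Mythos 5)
Your proposal is correct and follows essentially the same route as the paper's proof: both reduce $J_{\mp\frac12}$ in \eqref{A4} to $\cos$ and $\sin$, expand $({\rm e}^t-1)^{-1}$ as a geometric series, and evaluate the resulting integrals by the Macdonald--type Laplace transform (the paper cites the tabulated formula \eqref{B5}, which you rederive from $\int_0^\infty t^{\nu-1}{\rm e}^{-at-b/t}\,{\rm d}t=2(b/a)^{\nu/2}K_\nu(2\sqrt{ab})$ via Euler's formula, and you additionally justify the interchange of sum and integral). One small point: your calculation (and equally \eqref{B5} at $\nu=1$) produces denominators $(k\pm{\rm i}\,\gamma)^{\frac{\nu+1}{2}}=k\pm{\rm i}\,\gamma$ in the $S_{1,p}$ identity, so the square roots printed in the denominators of \eqref{B4} are evidently a misprint, not a flaw in your argument.
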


\begin{proof}
In order to prove the desired results we will need the following formula \cite{Prudnikov4}
   \begin{align}\label{B5}
      E_{p,q}^{\mp}(\gamma) &= \int_0^\infty x^\nu {\rm e}^{-qx-p/x} \left\{\mycom{\sin(\gamma x)}
			                         {\cos(\gamma x)}\right\}\,{\rm d}x\\\nonumber
														&= {\rm{i}}^{\frac{1\pm1}{2}}p^{\frac{\nu+1}{2}}\left(\dfrac{K_{\nu+1}
														   \left(2\sqrt{p(q+{\rm i}\,\gamma)}\right)}{(q+{\rm i}\,\gamma)^{\frac{\nu+1}{2}}}
													\mp \dfrac{K_{\nu+1}\left(2\sqrt{p(q-{\rm i}\,\gamma)}\right)}{(q-{\rm i}\,\gamma)^{\frac{\nu+1}{2}}}\right),
	 \end{align}
which holds for $\min\{\Re(p),\Re(q)\}>0$.

Now, since
   \[J_{-\frac12}(x) = \sqrt{\dfrac{2}{\pi x}}\,\cos x,\]
by virtue of \eqref{A4} and \eqref{B5} setting $q=k+1$, $k\in\mathbb{N}_0$ and $\nu=0$ it follows
   \[\sum_{k \geq 0} E^+_{p,k+1}(\gamma) = \sqrt{\dfrac{\pi\gamma}{2}}\int_0^\infty\dfrac{\sqrt{x}\,
	                 {\rm{e}}^{-p/x}}{{\rm{e}}^x-1}\,J_{-\frac12}(\gamma x)\,{\rm{d}}x = \dfrac12 S_{0,p}(\gamma),\]
which results in \eqref{B3}.

Analogously, from \eqref{A4} for $\nu=1$, applying \eqref{B5} for $E^-_{p,k+1}(\gamma)$ and $J_{\frac12}(x) = \sqrt{2/(\pi x)}\, \sin x$,
one implies the second statement \eqref{B4}.
\end{proof}

\begin{theorem}\label{tm4}
For all $\Re(p)>0$, $r>0$ there holds
   \begin{equation}\label{B7}
      S_{2,p}(r) = \dfrac1{(2r)^2}S_{1,p}(r)-\dfrac{p^{\frac32}}{(2r)^2}\sum_{n\ge1}
			             \left(\dfrac{K_3\left(2\sqrt{p(n+{\rm i}\,r)}\right)}{(n+{\rm i}\,r)^{\frac32}}
								 + \dfrac{K_3\left(2\sqrt{p(n-{\rm i}\,r)}\right)}{(n-{\rm i}\,r)^{\frac32}}\right).
   \end{equation}
\end{theorem}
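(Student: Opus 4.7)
The plan is to split the $S_{2,p}(r)$ integral representation in (1.4) into two pieces, recognize one as $S_{1,p}(r)$, and evaluate the other via the $\cos$-branch of (3.3).

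First I would apply (1.4) with $\mu=2$, writing
\[
   S_{2,p}(r)=\frac{\sqrt\pi}{(2r)^{3/2}\,\Gamma(3)}\int_0^\infty \frac{t^{5/2}\,{\rm e}^{-p/t}}{{\rm e}^t-1}\,J_{3/2}(rt)\,{\rm d}t,
\]
and then invoke the elementary closed form $J_{3/2}(x)=\sqrt{2/(\pi x)}\bigl(\sin x/x-\cos x\bigr)$. After the $x=rt$ substitution inside $J_{3/2}$, the $t^{5/2}$ collapses through the $\sqrt{1/(rt)}$ factor, and the constants collect to exactly $1/(2r)^2$, leaving
\[
   S_{2,p}(r)=\frac1{(2r)^2}\left[\frac1r\int_0^\infty\frac{t\,{\rm e}^{-p/t}}{{\rm e}^t-1}\sin(rt)\,{\rm d}t
            -\int_0^\infty\frac{t^2\,{\rm e}^{-p/t}}{{\rm e}^t-1}\cos(rt)\,{\rm d}t\right].
\]

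Next I would identify the first bracketed integral: specializing (1.4) to $\mu=1$ and using $J_{1/2}(x)=\sqrt{2/(\pi x)}\sin x$ gives, after the same kind of constant juggling,
\[
   S_{1,p}(r)=\frac1r\int_0^\infty\frac{t\,{\rm e}^{-p/t}}{{\rm e}^t-1}\sin(rt)\,{\rm d}t,
\]
so the $\sin$-integral is precisely $S_{1,p}(r)$. For the $\cos$-integral I would apply (3.3) in its $\cos$-branch with $\nu=2$, namely
\[
   \int_0^\infty x^2\,{\rm e}^{-qx-p/x}\cos(rx)\,{\rm d}x
      = p^{3/2}\left(\frac{K_3\!\bigl(2\sqrt{p(q+{\rm i}r)}\bigr)}{(q+{\rm i}r)^{3/2}}
                    +\frac{K_3\!\bigl(2\sqrt{p(q-{\rm i}r)}\bigr)}{(q-{\rm i}r)^{3/2}}\right),
\]
set $q=n$ for $n\in\mathbb N$, and sum in $n$; the geometric series $\sum_{n\ge1}{\rm e}^{-nx}=1/({\rm e}^x-1)$ converts the left side into exactly the second bracketed integral.

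Assembling the two pieces yields (3.5) upon substitution. The routine obstacle is keeping the prefactors consistent through the two different reductions of (1.4) — one must check that the $\sqrt\pi$, $\Gamma(\mu+1)$, $r^{\mu-1/2}$ and $\sqrt{2/(\pi rt)}$ factors combine to give $1/(2r)^2$ in front of the whole expression. The genuine analytic step is the interchange of sum and integral when summing (3.3) over $q=n$, which is justified by absolute convergence of $\sum_{n\ge1}{\rm e}^{-nx}$ uniformly on $x\in[\delta,\infty)$ and the integrability near $x=0$ provided by the factor ${\rm e}^{-p/x}$ together with $\Re(p)>0$; this is the same justification implicitly used in the proof of Theorem \ref{tm3}, so no new ideas are needed.
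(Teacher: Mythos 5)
Your proof is correct and takes essentially the same route as the paper: apply the integral representation \eqref{A4} with $\mu=2$, split via $J_{3/2}(x)=\sqrt{2/(\pi x)}\left(\sin x/x-\cos x\right)$, recognize the sine part as $S_{1,p}(r)$ through the $\mu=1$ case of \eqref{A4}, and evaluate the cosine part with the $\nu=2$ cosine branch of \eqref{B5} summed over $q=n$. The only quibble is that your cited equation numbers are shifted by one relative to the paper's numbering ($\eqref{A4}$ is (1.3) and $\eqref{B7}$ is (3.4)).
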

\begin{proof}
From the integral representation \eqref{A4} of $S_{\mu,p}(r)$, for $\mu=2$ it is
   \begin{align*}
      S_{2,p}(r) &= \dfrac{\sqrt{\pi}}{2(2r)^{3/2}}\int_0^\infty\dfrac{x^{5/2}{\rm e}^{-p/x}J_{\frac32}(rx)}{{\rm e}^x-1}\, {\rm d}x \\
                 &= \dfrac{\sqrt{\pi}}{2(2r)^{3/2}}\sum_{k\ge1}\int_0^\infty x^{5/2}{\rm e}^{-kx-p/x}J_{\frac32}(rx)\,{\rm d}x\\
								 &= \dfrac{1}{(2r)^2}\sum_{k\ge1}\int_0^\infty x^2 {\rm e}^{-kx-p/x}\left(\dfrac{\sin(rx)}{rx}-\cos(rx)\right)\,{\rm d}x,
   \end{align*}
where in the last equality we used the well--known formula
   \[J_{\frac32}(x) = \sqrt{\dfrac{2}{\pi x}}\left(\dfrac{\sin x}{x}-\cos x\right). \]
Further, with the help of \eqref{A4} and $J_{\frac12}(x) = \sqrt{2/(\pi x)}\, \sin x$ the previous expression can be rewritten into
   \[S_{2,p}(r) = \dfrac1{(2r)^2}\, S_{1,p}(r) - \dfrac1{(2r)^2}\sum_{k\ge1}\int_0^\infty x^{2}
	                {\rm e}^{-kx-p/x}\cos(rx)\,{\rm d}x. \]
Finally, using the Laplace transform of the function $x \mapsto x^2{\rm e}^{-p/x}\cos(rx)$, in the argument $k$ given by \eqref{B5} we
get the display \eqref{B7}.
\end{proof}

\begin{remark}
Using the formula \eqref{B4} derived in {\rm Theorem~\ref{tm3}} and the formula \eqref{B7} which connects $S_{2,p}(r)$ and $S_{1,p}(r)$ new representation for $S_{2,p}(r)$ can be derived.
\end{remark}

\section*{Acknowledgements}
This paper has been partially supported by Grant No. HRZZ-5435.

\end{document}